\newtheorem{theorem}{Theorem}[section]
\newtheorem{lemma}[theorem]{Lemma}
\newtheorem{definition}[theorem]{Definition}
\numberwithin{equation}{section}
\begin{document}
	\small \begin{center}{\textit{ In the name of
				Allah, the Beneficent, the Merciful.}}\end{center}
	\vspace{0.2cm}
	\large
	\title{On three-dimensional anti-commutative algebras}
	\author{U. Bekbaev}
		\thanks{{\scriptsize
			emails: uralbekbaev@gmail.com}}
	\maketitle
	\begin{center}
		\address{Turin Polytechnic University in Tashkent, Tashkent, Uzbekistan.}
	\end{center}
	
	\begin{abstract}
		This paper is devoted to the classification problem of tree-dimensional anti-commutative(zero-potent) algebras over any base field $\mathbb{F}$ such that $Char(\mathbb{F})\neq 2$ and every element admits a square root. 
		
	\end{abstract}
\small\ \qquad Mathematics Subject Classification (2010): 17A30, 17A60	
	
	\section{Introduction}
	
	Anti-commutative (zero-potent) algebras are among the algebraic structures closest to Lie algebras. In this paper, we study the classification problem of three-dimensional anti-commutative algebras over an arbitrary base field $\mathbb{F}$ such that $\operatorname{char}(\mathbb{F})\neq 2$ and every element of $\mathbb{F}$ admits a square root. 
	
	In particular, such a classification includes the classification of three-dimensional Lie algebras over $\mathbb{F}$ as a special case, which is a classical result. Therefore, in this paper we focus mainly on the non-Lie algebra case. We compare our results with the most recent classification obtained in \cite{K}. Finally, we describe all automorphisms and derivations of the canonical representatives of the non-Lie algebras.
	
	Our approach differs from the existing ones and provides a shorter and more direct solution.
	\section{The Main Result}
	
	\begin{definition}
		A vector space $\mathbb{V}$ over a field $\mathbb{F}$ equipped with a bilinear map
		\[
		\cdot : \mathbb{V}\otimes \mathbb{V} \rightarrow \mathbb{V},
		\qquad
		(\mathrm{x},\mathrm{y}) \mapsto \mathrm{x}\cdot \mathrm{y},
		\]
		satisfying
		\[
		(\alpha \mathrm{x} + \beta \mathrm{y}) \cdot \mathrm{z}
		= \alpha (\mathrm{x}\cdot \mathrm{z}) + \beta (\mathrm{y}\cdot \mathrm{z}),
		\qquad
		\mathrm{z}\cdot (\alpha \mathrm{x} + \beta \mathrm{y})
		= \alpha (\mathrm{z}\cdot \mathrm{x}) + \beta (\mathrm{z}\cdot \mathrm{y}),
		\]
		for all $\mathrm{x}, \mathrm{y}, \mathrm{z}\in \mathbb{V}$ and $\alpha, \beta \in \mathbb{F}$,
		is called an algebra $\mathbb{A}=(\mathbb{V},\cdot)$.
	\end{definition}
	
	Let $\mathbb{A}=(\mathbb{V},\cdot)$ be an $n$-dimensional algebra over $\mathbb{F}$ and
	$\mathbf{e}=(\mathrm{e}_1,\mathrm{e}_2,\ldots,\mathrm{e}_n)$ be a fixed basis.
	Then the bilinear map $\cdot : \mathbb{V}\times\mathbb{V}\to\mathbb{V}$ is represented
	by an $n\times n^2$ matrix (called the matrix of structure constants, abbreviated MSC)
	\[
	A=
	\begin{pmatrix}
		a_{11}^1 & a_{12}^1 & \cdots & a_{1n}^1 & a_{21}^1 & \cdots & a_{nn}^1\\
		a_{11}^2 & a_{12}^2 & \cdots & a_{1n}^2 & a_{21}^2 & \cdots & a_{nn}^2\\
		\vdots   & \vdots   &        & \vdots   & \vdots   &        & \vdots\\
		a_{11}^n & a_{12}^n & \cdots & a_{1n}^n & a_{21}^n & \cdots & a_{nn}^n
	\end{pmatrix},
	\]
	defined by
	\[
	\mathrm{e}_i\cdot \mathrm{e}_j
	= \sum_{k=1}^n a_{ij}^k\,\mathrm{e}_k,
	\qquad i,j=1,2,\ldots,n.
	\]
	
	Thus, once a basis $\mathbf{e}$ is fixed, the algebra $\mathbb{A}$ can be identified with
	its MSC $A\in M_{n\times n^2}(\mathbb{F})$.
	
	The product on $\mathbb{A}$ with respect to the basis $\mathbf{e}$ can be written as
	\[
	\mathrm{x}\cdot \mathrm{y} = \mathbf{e}\, A (x\otimes y),
	\]
	for any $\mathrm{x}=\mathbf{e}x$ and $\mathrm{y}=\mathbf{e}y$, where
	$x=(x_1,\ldots,x_n)^T$ and $y=(y_1,\ldots,y_n)^T$ are the coordinate column vectors
	of $\mathrm{x}$ and $\mathrm{y}$, respectively, and $x\otimes y$ denotes the
	Kronecker (tensor) product.
	
	In terms of MSC, an automorphism $\mathbf{g}$ and a derivation $\mathbf{D}$ of the algebra
	$\mathbb{A}$ are characterized by
	\[
	A = g^{-1}A(g\otimes g),
	\qquad
	DA = A(D\otimes I + I\otimes D),
	\]
	where $g$ (respectively, $D$) is the matrix of $\mathbf{g}$ (respectively, $\mathbf{D}$)
	in the chosen basis.
	
	\begin{definition}
		Two algebras with MSCs $A$ and $B$ are said to be isomorphic if there exists
		$g\in GL_n(\mathbb{F})$ such that
		\[
		B = gA(g^{-1}\otimes g^{-1}).
		\]
	\end{definition}
	
	Therefore, the classification of $n$-dimensional algebras up to isomorphism is equivalent
	to the classification of $n\times n^2$ matrices under the action
	\[
	A \mapsto gA(g^{-1}\otimes g^{-1}), \qquad g\in GL_n(\mathbb{F}).
	\]
	
	Once a basis is fixed, we do not distinguish between an algebra and its MSC, or between
	a linear map and its representing matrix.
	
	\begin{definition}
		An algebra $\mathbb{A}$ is called anti-commutative (zero-potent) if
		\[
		a^2 = 0 \quad \text{for all } a\in \mathbb{A}.
		\]
	\end{definition}
	
	In the three-dimensional case, we write the MSC in the form
	\[
	A=
	\begin{pmatrix}
		a_1 & a_2 & a_3 & a_4 & a_5 & a_6 & a_7 & a_8 & a_9\\
		b_1 & b_2 & b_3 & b_4 & b_5 & b_6 & b_7 & b_8 & b_9\\
		c_1 & c_2 & c_3 & c_4 & c_5 & c_6 & c_7 & c_8 & c_9
	\end{pmatrix}.
	\]
	
	Throughout the paper, $\mathbb{F}$ denotes a field of characteristic
	$\operatorname{char}(\mathbb{F})\neq 2$ in which every element admits a square root.
	
	\begin{lemma}
		Every three-dimensional anti-commutative algebra has a two-dimensional subalgebra.
	\end{lemma}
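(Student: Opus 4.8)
The plan is to exploit the special feature of dimension three: since $\operatorname{char}(\mathbb{F})\neq 2$ and $a^2=0$, the product $(\mathrm{x},\mathrm{y})\mapsto \mathrm{x}\cdot\mathrm{y}$ is alternating (indeed $(\mathrm{x}+\mathrm{y})^2=0$ gives $\mathrm{x}\cdot\mathrm{y}=-\mathrm{y}\cdot\mathrm{x}$), hence factors through $\Lambda^2\mathbb{V}$. I would fix on $\mathbb{F}^3$ the standard symmetric bilinear form $\langle x,y\rangle=x^Ty$ together with the induced cross product $x\times y$, defined by $\langle x\times y,z\rangle=\det(x,y,z)$; this is purely algebraic and valid over any field. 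Because $x\otimes y\mapsto x\times y$ realizes the universal alternating map onto the $3$-dimensional space $\Lambda^2\mathbb{V}$, there is a unique $3\times3$ matrix $M$ with
\[
\mathrm{x}\cdot\mathrm{y}=M(x\times y)\qquad\text{for all }\mathrm{x},\mathrm{y}\in\mathbb{V}.
\]

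Next I would translate the subalgebra condition into a statement about $M$. Since $\langle\cdot,\cdot\rangle$ is nondegenerate, every $2$-dimensional subspace has the form $W=n^\perp$ for some nonzero $n$, and $W^\perp=\mathbb{F}\,n$. Choosing a basis $a,b$ of $W$, the cross product $a\times b$ is nonzero and orthogonal to both $a$ and $b$, hence spans $W^\perp$; thus $a\times b=\lambda n$ with $\lambda\neq0$ and $a\cdot b=\lambda Mn$. As $a\cdot a=b\cdot b=0$ and all remaining products follow by bilinearity and anti-commutativity, $W$ is a subalgebra precisely when $Mn\in W$, that is
\[
n^TMn=0.
\]
The skew-symmetric part of $M$ contributes nothing to this scalar, so the condition is equivalent to $n^TSn=0$ where $S=\tfrac12(M+M^T)$ is the symmetric part of $M$.

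It therefore remains to find a nonzero $n$ annihilating the ternary quadratic form $q(n)=n^TSn$, and this is where the hypothesis on $\mathbb{F}$ enters. Diagonalizing $S$ (possible since $\operatorname{char}(\mathbb{F})\neq2$) turns $q$ into $d_1x_1^2+d_2x_2^2+d_3x_3^2$. If some $d_i=0$, the corresponding coordinate vector is isotropic; otherwise the binary restriction $d_1x_1^2+d_2x_2^2=0$ already has the solution $x_1=1$, $x_2=\sqrt{-d_1/d_2}$, $x_3=0$, which exists exactly because every element of $\mathbb{F}$ admits a square root. Pulling this nontrivial zero back through the (invertible) diagonalizing change of basis yields the required $n$, and $W=n^\perp$ is a two-dimensional subalgebra.

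The main obstacle is precisely the isotropy of $q$: over a general field a ternary quadratic form can be anisotropic, so without the square-root assumption the argument breaks down. Thus the field hypothesis is not a cosmetic convenience but the step that does the real work, guaranteeing the existence of the normal vector $n$ on which everything hinges.
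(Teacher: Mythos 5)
Your proof is correct, but it takes a genuinely different route from the paper. The paper argues by contradiction: assuming no two-dimensional subalgebra, it picks $e_1,e_2$, sets $e_3=e_1e_2$, and examines the pair $f_1=e_2$, $f_2=xe_1+e_3$; the determinant of $(f_1,f_2,f_1f_2)$ turns out to be the monic quadratic $x^2-cx+a$, which has a root precisely because $\operatorname{char}(\mathbb{F})\neq 2$ and every element has a square root, giving the contradiction. You instead set up the structural correspondence $\mathrm{x}\cdot\mathrm{y}=M(x\times y)$ between three-dimensional anti-commutative algebras and $3\times 3$ matrices, and translate the existence of a two-dimensional subalgebra into the isotropy of the ternary quadratic form $n\mapsto n^TSn$ attached to the symmetric part of $M$. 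Your approach buys more than the lemma asks: it gives a bijective dictionary between two-dimensional subalgebras $n^\perp$ and isotropic lines of $q$, and it is exactly the framework (algebra $\leftrightarrow$ matrix $M$) underlying the classification in the cited paper \cite{K}, so it could serve as a foundation for the whole classification rather than just this lemma. The paper's argument is shorter and more elementary, needing no bilinear form or diagonalization. Note that both proofs ultimately consume the field hypothesis at the same point --- solving a quadratic equation: the paper solves $x^2-cx+a=0$, you solve $x_2^2=-d_1/d_2$ --- so neither is more economical in its use of the assumption on $\mathbb{F}$; your closing remark that the square-root hypothesis is doing the real work applies equally to both.
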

	
	\begin{proof}
		Assume that a three-dimensional anti-commutative algebra $\mathbb{A}$ has no
		two-dimensional subalgebra. Then, for any linearly independent $e_1,e_2\in\mathbb{A}$,
		the elements $e_1,e_2,e_3$ are linearly independent, where $e_3=e_1e_2$.
		Write
		\[
		e_2e_3 = a e_1 + b e_2 + c e_3.
		\]
		
		Consider the linearly independent vectors $f_1=e_2$ and $f_2=xe_1+e_3$.
		Then
		\[
		f_1f_2
		= e_2(xe_1+e_3)
		= -x e_3 + a e_1 + b e_2 + c e_3
		= a e_1 + b e_2 + (c-x)e_3.
		\]
		Hence,
		\[
		\det
		\begin{pmatrix}
			0 & 1 & 0\\
			x & 0 & 1\\
			a & b & c-x
		\end{pmatrix}
		= x^2 - cx + a,
		\]
		which can be made zero by an appropriate choice of $x$. Thus,
		$f_1,f_2,f_1f_2$ are linearly dependent, yielding a contradiction.
	\end{proof}
	
	The following classification of three-dimensional Lie algebras over $\mathbb{F}$
	is well known \cite{W,P}.
	
	\begin{theorem}
		Any non-trivial three-dimensional Lie algebra over $\mathbb{F}$ is isomorphic to
		exactly one of the following algebras, listed by their matrices of structure constants:
		\[
		\mathfrak{sl}_2(\mathbb{F}) =
		\begin{pmatrix}
			0 & 0 & -2 & 0 & 0 & 0 & 2 & 0 & 0\\
			0 & 0 & 0  & 0 & 0 & 2 & 0 & -2 & 0\\
			0 & 1 & 0  & -1& 0 & 0 & 0 & 0 & 0
		\end{pmatrix},
		\]
		where
		\[
		e=\begin{pmatrix}0&1\\0&0\end{pmatrix},\quad
		f=\begin{pmatrix}0&0\\1&0\end{pmatrix},\quad
		h=\begin{pmatrix}1&0\\0&-1\end{pmatrix},
		\]
		\[
		\mathfrak{r}_{3,\lambda}=
		\begin{pmatrix}
			0 & 0 & 0 & 0 & 0 & 0 & 0 & 0 & 0\\
			0 & 1 & 0 & -1 & 0 & 0 & 0 & 0 & 0\\
			0 & 0 & \lambda & 0 & 0 & 0 & -\lambda & 0 & 0
		\end{pmatrix},
		\quad \lambda\in\mathbb{F}, \quad
		\mathfrak{r}_{3,\lambda}\simeq \mathfrak{r}_{3,1/\lambda}\ (\lambda\neq 0),
		\]
		\[
		\mathfrak{r}'_{3,1}=
		\begin{pmatrix}
			0 & 0 & 0 & 0 & 0 & 0 & 0 & 0 & 0\\
			0 & 1 & 1 & -1 & 0 & 0 & -1 & 0 & 0\\
			0 & 0 & 1 & 0 & 0 & 0 & -1 & 0 & 0
		\end{pmatrix},
		\]
		\[
		\mathfrak{h}_3=
		\begin{pmatrix}
			0 & 0 & 0 & 0 & 0 & 0 & 0 & 0 & 0\\
			0 & 0 & 0 & 0 & 0 & 0 & 0 & 0 & 0\\
			0 & 0 & 0 & 0 & 0 & 1 & 0 & -1 & 0
		\end{pmatrix},
		\]
		the Heisenberg algebra.
	\end{theorem}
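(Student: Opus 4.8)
The plan is to organize the argument around the single most useful invariant, the dimension of the derived subalgebra $\mathfrak{g}'=[\mathfrak{g},\mathfrak{g}]$, and to exploit the standing hypotheses on $\mathbb{F}$ through one clean consequence: since $\operatorname{char}(\mathbb{F})\neq 2$ and every element of $\mathbb{F}$ has a square root, every monic quadratic splits over $\mathbb{F}$ (its roots $\tfrac{1}{2}(-b\pm\sqrt{b^2-4c})$ lie in $\mathbb{F}$), and hence every operator on a two-dimensional $\mathbb{F}$-space is triangularizable. For a non-abelian $\mathfrak{g}$ one has $\dim\mathfrak{g}'\in\{1,2,3\}$, and I would treat the three cases in turn.

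If $\dim\mathfrak{g}'=1$, write $\mathfrak{g}'=\langle z\rangle$. When $z\in Z(\mathfrak{g})$ one chooses $e_1,e_2$ with $[e_1,e_2]=z=:e_3$ and checks that $e_3$ is central, recovering the Heisenberg algebra $\mathfrak{h}_3$; when $z\notin Z(\mathfrak{g})$ one rescales to obtain $e_1$ with $[e_1,z]=z$, sets $e_3:=z$, and uses the Jacobi identity to annihilate the remaining structure constants, landing on $\mathfrak{r}_{3,0}$.

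The central case is $\dim\mathfrak{g}'=2$. First I would show that $\mathfrak{g}'$ is an abelian ideal: if $\mathfrak{g}'$ were non-abelian, the Jacobi identity applied to a basis adapted to $\mathfrak{g}'$ would force $[\mathfrak{g},\mathfrak{g}]$ to be one-dimensional, contradicting $\dim\mathfrak{g}'=2$. Choosing $e_1\notin\mathfrak{g}'$, the operator $\varphi:=\operatorname{ad}(e_1)|_{\mathfrak{g}'}$ satisfies $\varphi(\mathfrak{g}')=[\mathfrak{g},\mathfrak{g}]=\mathfrak{g}'$, so $\varphi\in GL_2(\mathbb{F})$; moreover replacing $e_1$ by $\mu e_1$ modulo $\mathfrak{g}'$ alters $\varphi$ only by the scalar $\mu$ and by conjugation. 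Thus the isomorphism type is exactly the conjugacy class of $\varphi$ up to nonzero scaling, and triangularizability lets me exhaust the canonical forms: distinct nonzero eigenvalues give $\operatorname{diag}(1,\lambda)$ and the family $\mathfrak{r}_{3,\lambda}$, with the relation $\mathfrak{r}_{3,\lambda}\simeq\mathfrak{r}_{3,1/\lambda}$ produced by interchanging the two eigenlines; a repeated eigenvalue with diagonalizable $\varphi$ normalizes to the scalar operator and yields $\mathfrak{r}_{3,1}$; and a nontrivial Jordan block yields $\mathfrak{r}'_{3,1}$.

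Finally, if $\dim\mathfrak{g}'=3$ then $\mathfrak{g}$ is perfect, hence simple, and I would identify it with $\mathfrak{sl}_2(\mathbb{F})$ by producing an $\mathfrak{sl}_2$-triple: using the square-root hypothesis to guarantee eigenvalues in $\mathbb{F}$, I would locate an element $h$ whose adjoint is semisimple with a nonzero eigenvalue normalizable to $2$, take $e,f$ to be the corresponding eigenvectors, and match the brackets to those of the matrix basis $e,f,h$. I expect this last case to be the main obstacle, since it is here that the field hypothesis is indispensable: over a field in which $-1$ is a square the relevant norm form is isotropic, so the anisotropic form $\mathfrak{so}_3(\mathbb{F})$ collapses onto $\mathfrak{sl}_2(\mathbb{F})$ and there is a unique simple type, whereas over a general field the two must be distinguished. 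Pairwise non-isomorphism of the listed algebras then follows by comparing the invariants already used: $\dim\mathfrak{g}'$, the centrality of $\mathfrak{g}'$ when it is one-dimensional, and the scaling-conjugacy class of $\varphi$ (in particular the diagonalizable/Jordan dichotomy and the value of $\lambda$ modulo $\lambda\leftrightarrow 1/\lambda$) when it is two-dimensional.
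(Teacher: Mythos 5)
Your proposal is correct in outline, but note first that the paper never proves this statement at all: the classification of three-dimensional Lie algebras (Theorem 2.5) is quoted as a known classical result with references \cite{W,P}, so there is no internal proof to compare against. Your route --- stratifying by $\dim\mathfrak{g}'$, proving that $\mathfrak{g}'$ is abelian when it is two-dimensional, encoding that case as the conjugacy-up-to-scaling class of the invertible operator $\operatorname{ad}(e_1)|_{\mathfrak{g}'}$, and using the square-root hypothesis so that every quadratic splits (hence every operator on a plane is triangularizable and $\mathfrak{sl}_2$ is the unique simple type) --- is the classical derived-algebra argument, essentially Jacobson/Erdmann--Wildon adapted to the stated field hypothesis. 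The paper's own machinery, deployed for its main Theorem 2.6, is genuinely different: Lemma 2.4 guarantees a two-dimensional subalgebra of any three-dimensional anti-commutative algebra, and the classification proceeds by normalizing the matrix of structure constants under the subgroup $G_1\subset GL_3(\mathbb{F})$ preserving that subalgebra; the Lie algebras of Theorem 2.5 then reappear inside that computation as $A_6(\lambda)=\mathfrak{r}_{3,\lambda}$, $A_8=\mathfrak{r}'_{3,1}$, $A_9=\mathfrak{h}_3$, and $A_4(-1)\simeq\mathfrak{sl}_2$. Your approach buys a self-contained, invariant-theoretic proof of the cited theorem; the paper's approach buys uniformity, treating Lie and non-Lie algebras by a single matrix normalization.

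Two steps of your sketch need tightening before it is a complete proof. First, in the case $\dim\mathfrak{g}'=3$, the existence of your element $h$ requires an argument you do not give: if every $\operatorname{ad}(x)$ were nilpotent, Engel's theorem would make $\mathfrak{g}$ nilpotent, contradicting $\mathfrak{g}'=\mathfrak{g}$; once some $\operatorname{ad}(x)$ is non-nilpotent, its characteristic polynomial $t(t^2+bt+c)$ splits by the field hypothesis and has a nonzero root, and after rescaling to $[h,e]=2e$ the third eigenvalue of $\operatorname{ad}(h)$ is forced to be $-2$ because $\operatorname{tr}(\operatorname{ad}(h))=0$ (valid since $\mathfrak{g}$ is perfect); distinctness of $0,2,-2$ uses $\operatorname{char}(\mathbb{F})\neq 2$. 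Likewise ``perfect hence simple'' deserves its one-line justification (a proper nonzero ideal would give a perfect quotient of dimension $1$ or $2$, which does not exist). Second, in the case $\dim\mathfrak{g}'=\langle z\rangle$ one-dimensional and non-central, the Jacobi identity is automatically satisfied and does not by itself ``annihilate the remaining structure constants''; what finishes that case is the explicit change of basis $e_2\mapsto e_2-\beta e_1-\alpha z$, where $[e_2,z]=\beta z$ and $[e_1,e_2]=\alpha z$, which yields $\mathfrak{r}_{3,0}$. With these repairs your argument is a complete and correct proof of the statement the paper leaves to the literature.
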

		The following theorem constitutes the main result of this paper. 
	Unless explicitly stated otherwise, all listed algebras are non-Lie.
	
	\begin{theorem}
		Any non-trivial three-dimensional anti-commutative algebra over $\mathbb{F}$
		is isomorphic to exactly one of the following algebras, listed by their matrices
		of structure constants:
		\[
		A_1 =
		\begin{pmatrix}
			0 & 0 & 0 & 0 & 0 & 0 & 0 & 0 & 0\\
			0 & 1 & 0 & -1 & 0 & 0 & 0 & 0 & 0\\
			0 & 0 & 0 & 0 & 0 & 1 & 0 & -1 & 0
		\end{pmatrix},
		\]
		\[
		A_2(\lambda)=
		\begin{pmatrix}
			0 & 0 & 1 & 0 & 0 & 0 & -1 & 0 & 0\\
			0 & 1 & \lambda & -1 & 0 & 0 & -\lambda & 0 & 0\\
			0 & 0 & 0 & 0 & 0 & 1 & 0 & -1 & 0
		\end{pmatrix},\quad \lambda\in\mathbb{F},
		\]
		\[
		A_3=
		\begin{pmatrix}
			0 & 0 & 0 & 0 & 0 & 0 & 0 & 0 & 0\\
			0 & 1 & 1 & -1 & 0 & 0 & -1 & 0 & 0\\
			0 & 0 & 0 & 0 & 0 & 1 & 0 & -1 & 0
		\end{pmatrix},
		\]
		\[
		A_4(\lambda)=
		\begin{pmatrix}
			0 & 0 & 0 & 0 & 0 & 1 & 0 & -1 & 0\\
			0 & 1 & 0 & -1 & 0 & 0 & 0 & 0 & 0\\
			0 & 0 & \lambda & 0 & 0 & 0 & -\lambda & 0 & 0
		\end{pmatrix},\quad \lambda\in\mathbb{F},
		\]
		where $A_4(-1)\simeq \mathfrak{sl}_2$,
		\[
		A_5=
		\begin{pmatrix}
			0 & 0 & 0 & 0 & 0 & 1 & 0 & -1 & 0\\
			0 & 1 & 0 & -1 & 0 & 1 & 0 & -1 & 0\\
			0 & 0 & 1 & 0 & 0 & 0 & -1 & 0 & 0
		\end{pmatrix},
		\]
		\[
		A_6(\lambda)=
		\begin{pmatrix}
			0 & 0 & 0 & 0 & 0 & 0 & 0 & 0 & 0\\
			0 & 1 & 0 & -1 & 0 & 0 & 0 & 0 & 0\\
			0 & 0 & \lambda & 0 & 0 & 0 & -\lambda & 0 & 0
		\end{pmatrix}
		=\mathfrak{r}_{3,\lambda},
		\]
		with $A_6(\lambda)\simeq A_6(1/\lambda)$ for $\lambda\neq 0$,
		\[
		A_7(\lambda)=
		\begin{pmatrix}
			0 & 0 & 1 & 0 & 0 & 0 & -1 & 0 & 0\\
			0 & 1 & 0 & -1 & 0 & 0 & 0 & 0 & 0\\
			0 & 0 & \lambda & 0 & 0 & 0 & -\lambda & 0 & 0
		\end{pmatrix},\quad \lambda\in\mathbb{F},
		\]
		\[
		A_8=
		\begin{pmatrix}
			0 & 0 & 0 & 0 & 0 & 0 & 0 & 0 & 0\\
			0 & 1 & 1 & -1 & 0 & 0 & -1 & 0 & 0\\
			0 & 0 & 1 & 0 & 0 & 0 & -1 & 0 & 0
		\end{pmatrix}
		=\mathfrak{r}'_{3,1},
		\]
		\[
		A_9=
		\begin{pmatrix}
			0 & 0 & 0 & 0 & 0 & 1 & 0 & -1 & 0\\
			0 & 0 & 0 & 0 & 0 & 0 & 0 & 0 & 0\\
			0 & 0 & 0 & 0 & 0 & 0 & 0 & 0 & 0
		\end{pmatrix}
		=\mathfrak{h}_3,
		\]
		the Heisenberg algebra.
	\end{theorem}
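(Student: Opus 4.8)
The plan is to exploit the Lemma proved above: since every three-dimensional anti-commutative algebra $\mathbb{A}$ admits a two-dimensional subalgebra $W$, I would fix a basis $e_1,e_2$ of $W$ and extend it to a basis $e_1,e_2,e_3$ of $\mathbb{A}$. Anti-commutativity ($a^2=0$, equivalently $xy=-yx$ since $\operatorname{char}\mathbb{F}\neq 2$) means the whole product is encoded by the three vectors $e_1e_2,\ e_1e_3,\ e_2e_3$, i.e.\ by nine structure constants. The subalgebra condition forces $e_1e_2\in W$, which already kills the $e_3$-component of one of these, and the strategy is to spend the residual change-of-basis freedom — transformations preserving $W$, translations $e_3\mapsto e_3+w$ with $w\in W$, and rescalings $e_3\mapsto ce_3$ — to normalize the remaining constants into the listed forms.

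First I would classify the induced structure on $W$. A two-dimensional anti-commutative algebra is determined by $e_1e_2=\alpha e_1+\beta e_2$; if $(\alpha,\beta)=0$ then $W$ is abelian, and otherwise a change of basis inside $W$ normalizes the product to $e_1e_2=e_2$, the unique non-abelian two-dimensional Lie algebra. This gives the primary dichotomy. In each case I would then write $e_1e_3=a_1e_1+a_2e_2+a_3e_3$ and $e_2e_3=b_1e_1+b_2e_2+b_3e_3$, and study the linear operator $R$ on $W$ sending $w$ to the $W$-component of $w\cdot e_3$, together with the scalars $a_3,b_3$. Here the field hypothesis is decisive: since every element of $\mathbb{F}$ admits a square root and $\operatorname{char}\mathbb{F}\neq 2$, every quadratic splits over $\mathbb{F}$ (complete the square and extract the root of the discriminant), so the $2\times 2$ matrix of $R$ always has both eigenvalues in $\mathbb{F}$ and can be brought to diagonal or single Jordan block form. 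The eigenvalues, or rather their ratio, survive as genuine moduli and produce the one-parameter families $A_2(\lambda),A_4(\lambda),A_6(\lambda),A_7(\lambda)$, while the degenerate and nilpotent configurations give the isolated algebras $A_1,A_3,A_5,A_8,A_9$.

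The second half is to show the list is irredundant — that the families are pairwise non-isomorphic and that the stated coincidences hold ($A_4(-1)\simeq\mathfrak{sl}_2$, $A_6(\lambda)=\mathfrak{r}_{3,\lambda}$, $A_8=\mathfrak{r}'_{3,1}$, $A_9=\mathfrak{h}_3$). For this I would compute isomorphism invariants: the dimension of the derived algebra $\mathbb{A}\cdot\mathbb{A}$ and of the annihilator, whether the Jacobi identity holds (separating the Lie algebras in the list from the strictly non-Lie ones), and the configuration of two-dimensional subalgebras. A conceptually clean way to see both the normal forms and the invariants is to note that a skew product on a three-dimensional space can be written as $x\cdot y=N(x\times y)$ for a fixed cross product and a matrix $N\in M_3(\mathbb{F})$, with isomorphism acting by $N\mapsto \det(g)^{-1}\,gNg^{\mathsf T}$; splitting $N=S+K$ into its symmetric and skew parts, $S$ is classified up to scaled congruence by its rank (all nonzero values being absorbed via square roots) while $K$ corresponds to a covector. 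The rank of $S$ and the relative position of $K$ are isomorphism invariants, and the eigenvalue ratio of the associated pencil is exactly what forces the identification $A_6(\lambda)\simeq A_6(1/\lambda)$ and fixes which $\lambda$-values coincide.

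I expect the main obstacle to be the bookkeeping of the distinctness and exhaustiveness step rather than any single hard idea. Ensuring the case analysis is simultaneously exhaustive and non-overlapping — in particular that every algebra is accounted for exactly once, despite possibly possessing several two-dimensional subalgebras — and then verifying that the parameters are true moduli (that no further normalization collapses distinct $\lambda$) requires care. The $\lambda\leftrightarrow 1/\lambda$ type identifications are the subtle points: each must be produced by an explicit change of basis and matched against an invariant that is symmetric under that involution, so as to confirm that no additional identifications have been missed.
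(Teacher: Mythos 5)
Your plan has the same architecture as the paper's proof: invoke the Lemma, split according to whether the algebra contains a non-abelian two-dimensional subalgebra or only abelian ones, normalize the structure constants under the subgroup of $GL_3(\mathbb{F})$ preserving that subalgebra, and separate the resulting representatives by invariants. In the all-abelian branch your Jordan-form device is exactly what the paper does: there the full $GL_2$ acts by conjugation on the matrix of $W$-components, every quadratic over $\mathbb{F}$ splits, and only the nilpotent Jordan block avoids producing a non-abelian subalgebra, yielding $A_9=\mathfrak{h}_3$. Your cross-product reformulation $x\cdot y=N(x\times y)$ with the action $N\mapsto\det(g)^{-1}gNg^{\mathsf T}$ is also correct as stated and is a genuinely different, Bianchi-style route; but as sketched it classifies the symmetric part $S$ and the covector $K$ only separately, whereas the whole content of the theorem is the simultaneous normal form of the pair $(S,K)$, which you do not carry out.

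The genuine gap is in the non-abelian branch, which is where all the non-Lie algebras $A_1,\dots,A_8$ arise. There the admissible basis changes are only $g=\begin{pmatrix}1&0&x\\c&d&y\\0&0&z\end{pmatrix}$, since the $2\times 2$ block must be an automorphism of the normalized product $e_1e_2=e_2$; you may not conjugate your operator $R$ by an arbitrary element of $GL_2$, and conjugation of $R$ is entangled with the $e_3$-components $(a_3,b_3)$. Concretely, for $A_2(\lambda)$ (products $e_1e_2=e_2$, $e_1e_3=e_1+\lambda e_2$, $e_2e_3=e_3$) one has $R=\begin{pmatrix}1&0\\ \lambda&0\end{pmatrix}$, whose eigenvalues are $1,0$ for \emph{every} $\lambda$; the admissible conjugation that diagonalizes $R$ (the one with $c=\lambda$) forces $e_1e_3$ to acquire the $e_3$-component $\lambda$, so the parameter merely migrates and is never an eigenvalue ratio. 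Likewise for $A_4(\lambda)$ and $A_7(\lambda)$ the operator $R$ is the fixed matrix $\begin{pmatrix}0&1\\0&0\end{pmatrix}$, respectively $\mathrm{Diag}(1,0)$, for all $\lambda$, with the modulus sitting in the $e_3$-component; and since $A_2(\lambda)$, as well as $A_4(\lambda)$ for $\lambda\neq 0$, has \emph{no} abelian two-dimensional subalgebra (the images $e_1e_2$, $e_1e_3$, $e_2e_3$ are linearly independent), you cannot move these algebras into the branch where Jordan reduction is legitimate. So your key assertion — that $R$ can be brought to Jordan form in each case and that its eigenvalue ratios produce the families $A_2(\lambda),A_4(\lambda),A_6(\lambda),A_7(\lambda)$ — fails for most of these families, and following it literally would wrongly collapse $A_2(\lambda)$ to finitely many classes. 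What is needed instead is the explicit orbit computation under the restricted group (the paper's $M_{ij},N_{ij},P_{ij}$ formulas). Finally, note that the within-family distinctness ($A_i(\lambda)\not\simeq A_i(\mu)$ for $\lambda\neq\mu$, $i=2,4,7$) is not established by your invariants either; to be fair, the paper itself concedes this point is only supported by its analysis "so far," but a complete proof of the "exactly one" clause requires an invariant that actually separates the parameters.
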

	\begin{proof}
		We first classify three-dimensional anti-commutative algebras that admit a
		non-trivial two-dimensional subalgebra.
		For this purpose, we use the following well-known fact concerning
		two-dimensional algebras.
		
		Every non-trivial two-dimensional anti-commutative algebra is isomorphic to
		the algebra given by the matrix of structure constants
		\[
		A=
		\begin{pmatrix}
			0 & 0 & 0 & 0\\
			0 & 1 & -1 & 0
		\end{pmatrix},
		\]
		and its automorphism group is
		\[
		\mathrm{Aut}(A)=
		\left\{
		\begin{pmatrix}
			1 & 0\\
			c & d
		\end{pmatrix}
		:\ c,d\in\mathbb{F},\ d\neq 0
		\right\}.
		\]
		
		To classify all such three-dimensional algebras, it is sufficient to consider
		algebras with matrices of structure constants of the form
		\[
		A=
		\begin{pmatrix}
			0 & 0 & a_3 & 0 & 0 & a_6 & -a_3 & -a_6 & 0\\
			0 & 1 & b_3 & -1 & 0 & b_6 & -b_3 & -b_6 & 0\\
			0 & 0 & c_3 & 0 & 0 & c_6 & -c_3 & -c_6 & 0
		\end{pmatrix},
		\]
		and to study their isomorphisms under the action of the group
		\[
		G_1=
		\left\{
		g=
		\begin{pmatrix}
			1 & 0 & x\\
			c & d & y\\
			0 & 0 & z
		\end{pmatrix}
		:\ c,d,x,y,z\in\mathbb{F},\ dz\neq 0
		\right\},
		\]
		which consists of transformations preserving the subalgebra generated by
		$e_1,e_2$.
		
		For $A' = g^{-1}A(g\otimes g)$, we obtain
		\[
		A'=
		\begin{pmatrix}
			0 & 0 & M_{13} & 0 & 0 & M_{23} & -M_{13} & -M_{23} & 0\\
			0 & 1 & N_{13} & -1 & 0 & N_{23} & -N_{13} & -N_{23} & 0\\
			0 & 0 & P_{13} & 0 & 0 & P_{23} & -P_{13} & -P_{23} & 0
		\end{pmatrix},
		\]
		where
		\begin{align*}
			M_{13} &= z(a_3 + c a_6) - x(c_3 + c c_6), &
			M_{23} &= d(z a_6 - x c_6),\\
			N_{13} &= \frac{1}{d}\!\left[(1-(c_3+c c_6))(y-cx)
			+ z(b_3 + c b_6 - c a_3 - c^2 a_6)\right], &
			N_{23} &= z(b_6 - c a_6) - x(1 - c c_6) - c_6 y,\\
			P_{13} &= c_3 + c c_6, &
			P_{23} &= d c_6.
		\end{align*}
		
		Assume first that $c_6\neq 0$.
		Choosing
		\[
		d=\frac{1}{c_6},\quad
		c=-\frac{c_3}{c_6},\quad
		x=\frac{z a_6}{c_6},\quad
		y=\frac{z(b_6 - c a_6) - x(1 - c c_6)}{c_6},
		\]
		we obtain $P_{23}=1$, $P_{13}=0$, $M_{23}=0$, and $N_{23}=0$.
		Thus, we may assume that
		\[
		A=
		\begin{pmatrix}
			0 & 0 & a_3 & 0 & 0 & 0 & -a_3 & 0 & 0\\
			0 & 1 & b_3 & -1 & 0 & 0 & -b_3 & 0 & 0\\
			0 & 0 & 0 & 0 & 0 & 1 & 0 & -1 & 0
		\end{pmatrix}.
		\]
		
		If $a_3=b_3=0$, then choosing $x=y=0$, $c=-c_3$, and $d=1$ transforms $A$ into
		\[
		A_1=
		\begin{pmatrix}
			0 & 0 & 0 & 0 & 0 & 0 & 0 & 0 & 0\\
			0 & 1 & 0 & -1 & 0 & 0 & 0 & 0 & 0\\
			0 & 0 & 0 & 0 & 0 & 1 & 0 & -1 & 0
		\end{pmatrix},
		\]
		which is a non-Lie algebra.
		
		If $a_3\neq 0$, then choosing $x=y=0$, $z=1/a_3$, $c=-c_3$, and $d=1$
		transforms $A$ into
		\[
		A_2=
		\begin{pmatrix}
			0 & 0 & 1 & 0 & 0 & 0 & -1 & 0 & 0\\
			0 & 1 & a & -1 & 0 & 0 & -a & 0 & 0\\
			0 & 0 & 0 & 0 & 0 & 1 & 0 & -1 & 0
		\end{pmatrix},
		\]
		which is again non-Lie.
		
		If $a_3=0$ and $b_3\neq 0$, then $A$ can be transformed into
		\[
		A_3=
		\begin{pmatrix}
			0 & 0 & 0 & 0 & 0 & 0 & 0 & 0 & 0\\
			0 & 1 & 1 & -1 & 0 & 0 & -1 & 0 & 0\\
			0 & 0 & 0 & 0 & 0 & 1 & 0 & -1 & 0
		\end{pmatrix},
		\]
		which is also non-Lie.
		
		Next, assume $c_6=0$. In this case,
		\begin{align*}
			M_{13} &= z(a_3 + c a_6) - x c_3, &
			M_{23} &= d z a_6,\\
			N_{13} &= \frac{1}{d}\!\left[(1-c_3)(y-cx)
			+ z(b_3 + c b_6 - c a_3 - c^2 a_6)\right], &
			N_{23} &= z(b_6 - c a_6) - x,\\
			P_{13} &= c_3, &
			P_{23} &= 0.
		\end{align*}
		
		If $a_6\neq 0$ and $c_3\neq 1$, then $A$ can be transformed into
		\[
		A_4=
		\begin{pmatrix}
			0 & 0 & 0 & 0 & 0 & 1 & 0 & -1 & 0\\
			0 & 1 & 0 & -1 & 0 & 0 & 0 & 0 & 0\\
			0 & 0 & a & 0 & 0 & 0 & -a & 0 & 0
		\end{pmatrix},\quad a\neq 1.
		\]
		This algebra is Lie if and only if $a=-1$.
		The change of basis $e=e_2$, $f=-2e_3$, $h=2e_1$ shows that
		$A_4(-1)\simeq \mathfrak{sl}_2$.
		
		If $a_6\neq 0$ and $c_3=1$, then the system reduces to
		\begin{align*}
			M_{13} &= z(a_3 + c a_6) - x, &
			M_{23} &= d z a_6,\\
			N_{13} &= \frac{z}{d}(b_3 + c b_6 - c a_3 - c^2 a_6), &
			N_{23} &= z(b_6 - c a_6) - x,\\
			P_{13} &= 1, &
			P_{23} &= 0,
		\end{align*}
		and $A$ can be transformed into
		\[
		A_5(a)=
		\begin{pmatrix}
			0 & 0 & 0 & 0 & 0 & 1 & 0 & -1 & 0\\
			0 & 1 & 0 & -1 & 0 & a & 0 & -a & 0\\
			0 & 0 & 1 & 0 & 0 & 0 & -1 & 0 & 0
		\end{pmatrix},
		\]
		where $A_5(0)=A_4(1)$.
		
		The algebra $A_5(a)$ is not Lie, since
		\[
		(e_1e_2)e_3 + (e_2e_3)e_1 + (e_3e_1)e_2
		= 2e_1 + a e_2 \neq 0.
		\]
		Moreover, the equality
		\[
		A_5(a)=g^{-1}A_5(b)(g\otimes g),
		\qquad
		g=\mathrm{Diag}\!\left(1,\frac{b}{a},\frac{a}{b}\right),
		\]
		shows that, up to isomorphism, there is only one such algebra,
		namely
		\[
		A_5=
		\begin{pmatrix}
			0 & 0 & 0 & 0 & 0 & 1 & 0 & -1 & 0\\
			0 & 1 & 0 & -1 & 0 & 1 & 0 & -1 & 0\\
			0 & 0 & 1 & 0 & 0 & 0 & -1 & 0 & 0.
		\end{pmatrix}
		\]
		If $a_6=0$, then the system becomes
		\begin{align*}
			M_{13} &= z a_3 - x c_3, & M_{23} &= 0,\\
			N_{13} &= \frac{1}{d}\left[(1-c_3)(y-cx)+z(b_3+c b_6-c a_3)\right], &
			N_{23} &= z b_6 - x,\\
			P_{13} &= c_3, & P_{23} &= 0.
		\end{align*}
		Hence, choosing $x=z b_6$ reduces the system to
		\begin{align*}
			M_{13} &= z(a_3-b_6 c_3), & M_{23} &= 0,\\
			N_{13} &= \frac{1}{d}\left[(1-c_3)(y-z b_6 c)+z(b_3+c b_6-c a_3)\right], &
			N_{23} &= 0,\\
			P_{13} &= c_3, & P_{23} &= 0.
		\end{align*}
		
		If $c_3\neq 1$, one can make $N_{13}=0$ and normalize $M_{13}$ to either $0$ or $1$.
		Thus, we obtain
		\[
		A_6(a)=
		\begin{pmatrix}
			0 & 0 & 0 & 0 & 0 & 0 & 0 & 0 & 0\\
			0 & 1 & 0 & -1 & 0 & 0 & 0 & 0 & 0\\
			0 & 0 & a & 0 & 0 & 0 & -a & 0 & 0
		\end{pmatrix}
		=\mathfrak{r}_{3,a},\quad a\neq 1,
		\]
		and
		\[
		A_7(a)=
		\begin{pmatrix}
			0 & 0 & 1 & 0 & 0 & 0 & -1 & 0 & 0\\
			0 & 1 & 0 & -1 & 0 & 0 & 0 & 0 & 0\\
			0 & 0 & a & 0 & 0 & 0 & -a & 0 & 0
		\end{pmatrix},\quad a\neq 1.
		\]
		The algebra $A_7(a)$ is not Lie, since
		\[
		(e_1e_2)e_3+(e_2e_3)e_1+(e_3e_1)e_2
		=e_2e_3-(e_1+a e_3)e_2=-e_2\neq 0.
		\]
		
		If $c_3=1$, then one can normalize $M_{13}=1$ and $N_{13}=0$ in the case $b_6\neq a_3$;
		otherwise, one can achieve $M_{13}=0$ and $N_{13}=0$ or $N_{13}=1$.
		This yields the following additional algebras:
		\[
		A=
		\begin{pmatrix}
			0 & 0 & 1 & 0 & 0 & 0 & -1 & 0 & 0\\
			0 & 1 & 0 & -1 & 0 & 0 & 0 & 0 & 0\\
			0 & 0 & 1 & 0 & 0 & 0 & -1 & 0 & 0
		\end{pmatrix}
		=A_7(1),
		\]
		\[
		A=
		\begin{pmatrix}
			0 & 0 & 0 & 0 & 0 & 0 & 0 & 0 & 0\\
			0 & 1 & 0 & -1 & 0 & 0 & 0 & 0 & 0\\
			0 & 0 & 1 & 0 & 0 & 0 & -1 & 0 & 0
		\end{pmatrix}
		=A_6(1)=\mathfrak{r}_{3,1},
		\]
		and
		\[
		A_8=
		\begin{pmatrix}
			0 & 0 & 0 & 0 & 0 & 0 & 0 & 0 & 0\\
			0 & 1 & 1 & -1 & 0 & 0 & -1 & 0 & 0\\
			0 & 0 & 1 & 0 & 0 & 0 & -1 & 0 & 0
		\end{pmatrix}
		=\mathfrak{r}'_{3,1}.
		\]
		
		Next, we consider three-dimensional anti-commutative algebras whose only
		two-dimensional subalgebras are trivial.
		We compute $A'=g^{-1}A(g\otimes g)$, where
		\[
		A=
		\begin{pmatrix}
			0 & 0 & a_3 & 0 & 0 & a_6 & -a_3 & -a_6 & 0\\
			0 & 0 & b_3 & 0 & 0 & b_6 & -b_3 & -b_6 & 0\\
			0 & 0 & c_3 & 0 & 0 & c_6 & -c_3 & -c_6 & 0
		\end{pmatrix},
		\quad
		g=
		\begin{pmatrix}
			a & b & x\\
			c & d & y\\
			0 & 0 & z
		\end{pmatrix},
		\]
		a transformation preserving the subalgebra generated by $e_1,e_2$.
		
		We may assume that if $b_3=0$, then $a_3=c_3=0$, and if $a_6=0$, then
		$b_6=c_6=0$; otherwise, the algebra would contain a non-trivial
		two-dimensional subalgebra.
		
		One computes
		\[
		A'=
		\begin{pmatrix}
			0 & 0 & C_1 & 0 & 0 & D_1 & -C_1 & -D_1 & E_1\\
			0 & 0 & C_2 & 0 & 0 & D_2 & -C_2 & -D_2 & E_2\\
			0 & 0 & C_3 & 0 & 0 & D_3 & -C_3 & -D_3 & E_3
		\end{pmatrix},
		\]
		where $\Delta_0=(ad-bc)z\neq 0$ and the coefficients are given explicitly
		(as above).
		
		Substituting $x=y=0$ yields
		\[
		\begin{pmatrix}
			C_1 & D_1\\
			C_2 & D_2
		\end{pmatrix}
		=
		\begin{pmatrix}
			a & b\\
			c & d
		\end{pmatrix}^{-1}
		\begin{pmatrix}
			a_3 & a_6\\
			b_3 & b_6
		\end{pmatrix}
		\begin{pmatrix}
			a & b\\
			c & d
		\end{pmatrix},
		\qquad
		(C_3,D_3)=(c_3,c_6)
		\begin{pmatrix}
			a & b\\
			c & d
		\end{pmatrix}.
		\]
		
		There are only the following possibilities:
		\begin{enumerate}
			\item $\begin{pmatrix}C_1&D_1\\C_2&D_2\end{pmatrix}=
			\begin{pmatrix}\lambda_1&0\\0&\lambda_2\end{pmatrix}$,
			$\lambda_1\neq\lambda_2$, with $C_3,D_3\in\{0,1\}$;
			\item $\begin{pmatrix}C_1&D_1\\C_2&D_2\end{pmatrix}=
			\lambda I$, with $(C_3,D_3)=(1,0)$ or $(0,0)$;
			\item $\begin{pmatrix}C_1&D_1\\C_2&D_2\end{pmatrix}=
			\begin{pmatrix}\lambda&1\\0&\lambda\end{pmatrix}$,
			with $(C_3,D_3)=(a,1)$, $(1,0)$, or $(0,0)$.
		\end{enumerate}
		
		In order to avoid non-trivial two-dimensional subalgebras, one must have
		$(C_1,D_2)=(0,0)$.
		In this case, all resulting algebras are Lie algebras, and among them only
		\[
		A_9=
		\begin{pmatrix}
			0 & 0 & 0 & 0 & 0 & 1 & 0 & -1 & 0\\
			0 & 0 & 0 & 0 & 0 & 0 & 0 & 0 & 0\\
			0 & 0 & 0 & 0 & 0 & 0 & 0 & 0 & 0
		\end{pmatrix},
		\]
		corresponding to
		$\begin{pmatrix}C_1&D_1\\C_2&D_2\end{pmatrix}=
		\begin{pmatrix}0&1\\0&0\end{pmatrix}$,
		has no non-trivial two-dimensional subalgebra.
		This algebra is the Heisenberg algebra $\mathfrak{h}_3$.
		
		To show that the non-Lie algebras
		$A_1$, $A_2(\lambda)$, $A_3$, $A_4(\lambda)$, $A_5$, and $A_7(\lambda)$
		are pairwise non-isomorphic, one may use standard invariants such as
		$\dim Z(A)$, $\dim\mathrm{Ann}(A)$, $\dim A^2$, and
		$\dim\mathrm{InnDer}(A)$.
		
		Among these, only $A_1$ has a non-trivial center,
		$Z(A_1)=\langle e_1\rangle$, and thus it is not isomorphic to the others.
		Only $A_7(\lambda)$ has two-dimensional annihilator, and therefore it is
		also not isomorphic to the remaining algebras.
		
		The following table shows that
		$A_2(\lambda)$, $A_3$, $A_4(\lambda)$, and $A_5$
		are mutually non-isomorphic:
		\[
		\begin{array}{c|ccc}
			A & \dim\mathrm{Ann}(A) & \dim A^2 & \dim(\mathrm{Ann}(A)\cap A^2)\\ \hline
			A_2(\lambda) & 0 & 3 & 0\\
			A_3 & 1 & 2 & 1\\
			A_4(\lambda) & 1 & 2 & 0\\
			A_5 & 1 & 3 & 1
		\end{array}
		\]
	Of course, in each case $i = 2,4,7$ there arises the question whether the algebras $A_i(\lambda)$ corresponding to different values of $ \lambda$ are mutually isomorphic or not. So far, our analysis shows that the answer is \textbf{no}. In this context, the study of derivations and automorphisms of these algebras may be useful.
		\end{proof}
		In \cite{K}, one can find another classification of three-dimensional anti-commutative non-Lie algebras over $\mathbb{F}$. The MSCs of their canonical representatives, written in our notation, are as follows:
	\begin{itemize}
		\item	$ZA_1=\begin{pmatrix}0&0&0&0&0&0&0&0&0\\ 
			0&0&0&0&0&1&0&-1&0\\
			0&1&0&-1&0&0&0&0&0
		\end{pmatrix}$,
		\item $ZA_2=\begin{pmatrix}0&0&0&0&0&0&0&0&0\\ 
			0&0&0&0&0&1&0&-1&0\\
			0&1&-1&-1&0&1&1&-1&0
		\end{pmatrix}$,
		\item $ZA_3(a)=\begin{pmatrix}0&0&0&0&0&1&0&-1&0\\ 
			0&0&-1&0&0&\lambda&1&-\lambda&0\\
			0&1&0&-1&0&0&0&0&0
		\end{pmatrix}, 0\neq \lambda\in \mathbb{F}, $
		\item $ZA_4=\begin{pmatrix}0&0&-1&0&0&1&1&-1&0\\ 
			0&0&-2&0&0&2&2&-2&0\\
			0&1&0&-1&0&2&0&-2&0
		\end{pmatrix},$
		\item $ZA_5=\begin{pmatrix}0&0&0&0&0&1&0&-1&0\\ 
			0&0&-1&0&0&1&1&-1&0\\
			0&1&0&-1&0&i&0&-i&0
		\end{pmatrix},\ i^2=-1.$
	\end{itemize}
		
	A comparison with our results above shows that the classification presented in \cite{K} is \textbf{not complete}.
	
	The following result describes the automorphisms and derivations our representatives of three-dimensional non-Lie algebras. 

\begin{theorem} The derivations and automorphisms of non-Lie algebras, presented in Theorem, are:\\
	$
	\mathrm{Der}(A_1) = \{ 
	\begin{pmatrix}
		0 & 0 & 0\\
		0 & a & 0\\
		0 & 0 & a
	\end{pmatrix}:\ a\in \mathbb{F}\}, \ 
	\mathrm{Aut}(A_1) = 
	\{ \begin{pmatrix}
		1 & 0 & 0 \\
		0 & t & 0\\
		0 & 0 & t
	\end{pmatrix}:\ 0\neq t\in \mathbb{F}\},$\\
	$Der(A_2(\lambda))=\{a
	\begin{pmatrix}
		-1 & \dfrac{1}{\lambda} & \dfrac{1}{\lambda} \\
		\dfrac{\lambda-1}{\lambda} & 0 & \dfrac{\lambda-1}{\lambda} \\
		-\dfrac{1}{\lambda} & \dfrac{1}{\lambda} & 1
	\end{pmatrix}: a\in \mathbb{F}\},$
	if  $\lambda\neq 0$;
	\ $Der(A_2(0))=\{ a\begin{pmatrix} 0 & 1 & 1\\ -1 & 0 & -1\\ -1 & 1 & 0\end{pmatrix}: a\in \mathbb{F}\},$
	$Aut(A_2(\lambda ))=
	\{I_3\}$,\\
	$Der(A_3) =\{ a\begin{pmatrix}
		-1 & 0 & 0\\
		1 & 0 & 0\\
		-1 & 1 & 1
	\end{pmatrix}: a\in\mathbb{F}\}, Aut(A_3)=\{\begin{pmatrix}1/t&0&0\\1-1/t&1&0\\1/t-1&t-1&t\end{pmatrix}:\ t\in\mathbb{F}^*\},$\\
	$ Der(A_4(\lambda)) =\{ a\begin{pmatrix} 0 & 0 & 0\\ 0 & -1 & 0\\ 0 & 0 & 1\end{pmatrix},\ a\in\mathbb{F}\},\  where\  \lambda\neq -1,\\ Aut(A_4(\lambda))=\{\begin{pmatrix}1&0&0\\0&t&0\\0&0&1/t\end{pmatrix}:\ t\in\mathbb{F}^*\}$,\  where\  $\lambda\neq -1$,\\ 
	$Der(A_5) = \{a\begin{pmatrix} 0 & 0 & 0\\ 0 & 0 & 1\\ 0 & 0 & 0\end{pmatrix}:\ a\in\mathbb{F}\},\\ Aut(A_5)=\{\begin{pmatrix}1&0&0\\0&1&t\\0&0&1\end{pmatrix}: t\in\mathbb{F}\}\cup
	\{\begin{pmatrix}1&0&-1\\1&-1&t\\0&0&-1\end{pmatrix}: t\in\mathbb{F}\}.$\\
	$ Der(A_7(\lambda)) =\{\begin{pmatrix}0&0&0\\b(1-\lambda)&a&b\\0&0&0\end{pmatrix}:\ a,b\in\mathbb{F}\}$,\\
	$Aut(A_7(\lambda))=\{I_3\}$, if $\lambda\neq 0$,\
	$Aut(A_7(0))=\{\begin{pmatrix}1 & 0 & 0\\0 & t & 0\\0 & 0 & 1\end{pmatrix}:\ t\in\mathbb{F}^*\}$.
\end{theorem}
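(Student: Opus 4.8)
The plan is to compute, for each family listed in the main theorem, the derivation algebra and the automorphism group directly from the two defining identities recorded in the preliminaries, namely $DA = A(D\otimes I + I\otimes D)$ for a derivation and $A = g^{-1}A(g\otimes g)$ (equivalently $gA = A(g\otimes g)$) for an automorphism. In practice I would first read off the multiplication table of each representative from its MSC — for instance $A_1$ is determined by $e_1 e_2 = e_2$ and $e_2 e_3 = e_3$ — and then translate the operator identities into the pointwise conditions $\mathbf{D}(e_ie_j) = (\mathbf{D}e_i)e_j + e_i(\mathbf{D}e_j)$ and $\mathbf{g}(e_ie_j) = (\mathbf{g}e_i)(\mathbf{g}e_j)$. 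Since the algebras are anti-commutative, the identities on the diagonal pairs $(e_i,e_i)$ hold automatically, so it suffices to impose them on the three pairs $(e_1,e_2)$, $(e_1,e_3)$, $(e_2,e_3)$.

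For the derivations this produces, in each case, a homogeneous linear system in the nine entries $d_{ij}$ of $D$; solving it exhibits $\mathrm{Der}(A)$ as a subspace of $M_3(\mathbb{F})$, and I would record a basis. This step is pure linear algebra and should go through family by family without conceptual difficulty, the only care being to keep the parameter symbolic in the families $A_2(\lambda)$, $A_4(\lambda)$, $A_7(\lambda)$ and to isolate the special values (such as $\lambda=0$) where the dimension of the solution space jumps.

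For the automorphisms the same three pairs yield a system of quadratic equations in the entries of $g$, to be solved subject to $\det g\neq 0$. Here I would first reduce the number of unknowns by using that any automorphism must preserve the canonical invariant subspaces already employed in the classification — the annihilator $\mathrm{Ann}(A)$, the derived algebra $A^2$, and the center — which forces $g$ into a block-triangular shape and often fixes several entries before the quadratic relations are imposed. Solving the remaining relations determines $\mathrm{Aut}(A)$; the hypotheses $\operatorname{char}(\mathbb{F})\neq 2$ and the existence of square roots enter precisely at the point where one extracts roots of the resulting scalar equations.

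The main obstacle I anticipate is the nonlinearity of the automorphism system, and in particular the danger of overlooking solution components not connected to the identity: $A_5$, for example, has an automorphism group with two sheets, so a purely local computation near $g=I$ would miss the second one. To guard against this I would solve the quadratic systems globally rather than perturbatively, and cross-check each answer against the general principle that $\mathrm{Der}(A)$ is the tangent space at the identity to $\mathrm{Aut}(A)$: a mismatch between the dimension of the computed $\mathrm{Der}(A)$ and that of the identity component of the computed $\mathrm{Aut}(A)$ would immediately flag an arithmetic error. Finally, since each listed matrix is short, I would verify by direct substitution that it genuinely satisfies the derivation (respectively automorphism) identity, as this cheap check reliably catches sign and indexing mistakes.
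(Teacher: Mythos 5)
Your proposal takes essentially the same route as the paper: the paper's proof consists precisely of solving $XA=A(X\otimes I+I\otimes X)$ (respectively $XA=A(X\otimes X)$ with $X$ invertible) for each canonical representative, which is exactly the basis-pair computation you describe, and your restriction to the three pairs $(e_1,e_2)$, $(e_1,e_3)$, $(e_2,e_3)$ is the correct reduction since anti-commutativity handles the diagonal and transposed pairs. Your additional safeguards (using invariance of $\mathrm{Ann}(A)$ and $A^2$ to pre-shape $g$, the tangent-space dimension cross-check, and the warning about the second component of $\mathrm{Aut}(A_5)$) are refinements of, not departures from, that computation.
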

The proof consists of solving the equation $XA=A(X\otimes I+I\otimes X)\ (\mbox{respectively}, XA=A(X\otimes X)$) for a third-order matrix $X$, which is required to be invertible in the case of automorphisms, where $A$ is a canonical representative listed in Theorem 2.6.


\begin{thebibliography}{99}
	
	\bibitem{K} Yuji Kobayashi,Kiyoshi Shirayanagi,Sin-Ei Takahasi, Makoto Tsukada.
		Classification of three-dimensional zeropotent algebras over an algebraically closed field, (2017),\textit{
	 Communications in Algebra} 45(12), 5037-5052.
	
	\bibitem{W} Fowler-Wright, A. (2014). The Classification of Three-Dimensional Lie Algebras. Thesis. Coventry: The University of Warwick.
	
	\bibitem{P}  J. Patera, P. Winternitz, (1977), Subalgebras of real three- and four-dimensional Lie algebras, \textit{Journal of Mathematical Physics}, 18(7), 1449--1455.
	
	
	
	
	\end{thebibliography}
 \end{document}